\newtheorem{theorem}{Theorem}[section]
\newtheorem{corollary}[theorem]{Corollary}
\theoremstyle{definition}
\theoremstyle{remark}
\newcommand{\U}{\mathbb{U}}
\newcommand{\C}{\mathbb{C}}
\newcommand{\Wg}{\operatorname{Wg}}
\newcommand{\Moeb}{\operatorname{Moeb}}
\newcommand{\Tr}{\operatorname{Tr}}
\newcommand{\CC}{\mathcal{C}}
\newcommand{\Cat}{\operatorname{Cat}}
\newcommand{\perm}{\operatorname{perm}}
\newcommand{\Sym}{\operatorname{Sym}}
\begin{document}
\date{\today}
\sloppy

\title[JM Elements and the Weingarten Function]
{Complete Symmetric Polynomials in Jucys-Murphy Elements and the Weingarten Function}

\author[J. I. Novak]{Jonathan Novak}
\address{Queen's University, Department of Mathematics and Statistics,
Jeffery Hall, Kingston, ON K7L 3N6, Canada}
\email{jnovak@mast.queensu.ca}
\urladdr{www.mast.queensu.ca/jnovak}
%\subjclass[2000]{46L54 (46L65, 46L53, 60G09)}
%\keywords{Free probability, quantum exchangea
	
	\begin{abstract}
	A connection is made between complete homogeneous symmetric polynomials 
	in Jucys-Murphy elements and the unitary Weingarten function from random matrix
	theory.  In particular we show that $h_r(J_1,\dots,J_n),$ the complete homogeneous symmetric
	polynomial of degree $r$ in the JM elements, coincides with the $r$th term in the 
	asymptotic expansion of
	the Weingarten function.  We use this connection to determine precisely which
	conjugacy classes occur in the class basis resolution of $h_r(J_1,\dots,J_n),$ and
	to explicitly determine the coefficients of the classes of minimal height when $r < n.$
	These coefficients, which turn out to be products of Catalan numbers, are governed by the 
	Moebius function of the non-crossing partition lattice $NC(n).$
	\end{abstract}

	\maketitle
	
	\section{Introduction}
   	Consider the tower
	\begin{equation} 
		\{1\} = \C[S(1)] \subset \C[S(2)] \subset \dots \C[S(n)] \subset \dots
	\end{equation}
	of the group algebras of the symmetric groups, where $\C[S(n-1)]$ is canonically
	embedded in $\C[S(n)]$ as the linear span of permutations having $n$ as a fixed
	point.  Recently, Okounkov and Vershik \cite{OV} have developed a novel approach to 
	the representation theory of the symmetric groups in which a key role is played the
	the {\it Gelfand-Zetlin} algebra $GZ(n),$ which is by definition the algebra generated
	by $Z(1),\dots,Z(n),$ where $Z(i)$ is the center of $\C[S(i)].$
	
	Okounkov and Vershik give a remarkably concrete description of the 
	Gelfand-Zetlin algebra: they prove that $GZ(n)=\C[J_1,\dots,J_n],$ where
	the $J_i$'s are special elements of $\C[S(n)]$ called the {\it Jucys-Murphy} elements.
	The JM elements are defined by $J_1:=0$ and
	\begin{equation}
		J_i= (1,i) + \dots + (i-1,i)
	\end{equation}
	for $2 \leq i \leq n.$  For example, the JM elements in $\C[S(4)]$ are
	\begin{align*}
		J_1 &= 0 \\
		J_2 &= (1,2) \\
		J_3 &= (1,3) + (2,3) \\
		J_4 &= (1,4) + (2,4) + (3,4).
	\end{align*}
	
	Given Okounkov and Vershik's characterization of $GZ(n),$ it is natural to ask 
	for a characterization of $Z(n)$ itself in terms of the JM elements.  It was shown
	by Jucys \cite{J} in $1974$ that $Z(n)=\Sym[J_1,\dots,J_n],$ the algebra of 
	{\it symmetric} polynomials in the JM elements.  Jucys proved this remarkable
	fact by explicitly evaluating the elementary symmetric polynomials in JM elements.
	
	For each partition (or Young diagram) $\mu \vdash n,$ let $\CC_{\mu}$ denote the 
	formal sum of all permutations in $S(n)$ of cycle type $\mu.$  Then 
	$\{\CC_{\mu}\}_{\mu \vdash n}$ is a natural basis of $Z(n),$ called the 
	{\it class basis}.  If one thinks of $Z(n)$ as the algebra of functions $f:S(n)
	\rightarrow \C$ which are constant on conjugacy classes, then $\CC_{\mu}$
	is the indicator function of the conjugacy class labelled by $\mu.$
			
	\begin{theorem}[\cite{J}]
	\label{Jucys}
	For $1 \leq r \leq n,$ let 
	\begin{equation}
		e_r(J_1,\dots,J_n) = \sum_{1 \leq i_1 < i_2 < \dots < i_r \leq n} 
		J_{i_1}J_{i_2} \dots J_{i_n}
	\end{equation}
	be the $r$th elementary symmetric polynomial in the JM elements, and put
	$e_0:=1$ (the identity element of $\C[S(n)]$).  Then for $0 \leq r \leq n,$
	\begin{equation}
		e_r(J_1,\dots,J_n) = \sum_{\substack{\mu \vdash n\\ \ell(\mu) = n-r}}
		\CC_{\mu}.
	\end{equation}
	\end{theorem}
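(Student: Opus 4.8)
The plan is to deduce the theorem from a single polynomial identity in $\C[S(n)][x]$, namely
\[
	(x + J_1)(x + J_2) \cdots (x + J_n) = \sum_{\sigma \in S(n)} x^{c(\sigma)}\,\sigma,
\]
where $c(\sigma)$ denotes the number of cycles of $\sigma$. Granting this, the theorem drops out by comparing coefficients of $x^{n-r}$: since $x$ is central, the left-hand side expands as $\sum_{r=0}^{n} x^{n-r}\,e_r(J_1,\dots,J_n)$ with $e_r$ exactly as in the statement (the indices appearing in increasing order, so no commutativity among the $J_i$ is needed here), while on the right-hand side the permutations with $c(\sigma) = n-r$ are precisely those whose cycle type $\mu \vdash n$ satisfies $\ell(\mu) = n-r$, and their formal sum is $\sum_{\ell(\mu)=n-r}\CC_\mu$. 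The case $r=0$ is the normalization $e_0 = 1 = \CC_{(1^n)}$.

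To prove the identity I would induct on $n$. The base case $n=1$ is immediate since $J_1 = 0$ and the identity is the only permutation of $\{1\}$. For the inductive step, view $S(n-1)\subset S(n)$ as the stabilizer of $n$, recall that $J_1,\dots,J_{n-1}\in\C[S(n-1)]$, and apply the inductive hypothesis inside $\C[S(n)][x]$:
\[
	(x+J_1)\cdots(x+J_{n-1}) = \sum_{\tau\in S(n-1)} x^{c(\tau)-1}\,\tau,
\]
the $-1$ recording that each such $\tau$ acquires $n$ as an extra fixed point when regarded in $S(n)$. Writing $x+J_n = x + \sum_{i=1}^{n-1}(i,n)$ and distributing splits the product $(x+J_1)\cdots(x+J_n)$ into the $x$-part $\sum_{\tau} x^{c(\tau)}\tau$, which is exactly $\sum_{\sigma:\,\sigma(n)=n} x^{c(\sigma)}\sigma$, together with the $J_n$-part $\sum_{\tau\in S(n-1)}\sum_{i=1}^{n-1} x^{c(\tau)-1}\,\tau(i,n)$.

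The crux is to identify this second sum with $\sum_{\sigma:\,\sigma(n)\neq n} x^{c(\sigma)}\sigma$. I would check that $(\tau,i)\mapsto \tau(i,n)$ is a bijection from $S(n-1)\times\{1,\dots,n-1\}$ onto $\{\sigma\in S(n):\sigma(n)\neq n\}$, with inverse $\sigma \mapsto \bigl(\sigma(i,n),\,i\bigr)$ where $i = \sigma^{-1}(n)$; one verifies directly that $\sigma(i,n)$ then fixes $n$ and that $i\neq n$. For the exponent, note that in $\tau$ the elements $i$ and $n$ lie in different cycles (because $n$ is a fixed point of $\tau$), so right-multiplication by the transposition $(i,n)$ merges those two cycles into one, giving $c(\tau(i,n)) = c(\tau)-1$; hence $x^{c(\tau)-1} = x^{c(\sigma)}$ for $\sigma = \tau(i,n)$. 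Adding the two parts yields $\sum_{\sigma\in S(n)} x^{c(\sigma)}\sigma$, completing the induction. The main obstacle is exactly this cycle bookkeeping — tracking that the passage $S(n-1)\hookrightarrow S(n)$ adds one trivial cycle while multiplication by $(i,n)$ with $i,n$ in distinct cycles removes one, so that the two shifts cancel; verifying the bijection and the "different cycles" claim carefully is what makes the argument go through.
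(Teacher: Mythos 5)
Your proof is correct. The paper itself gives no proof of this theorem --- it is quoted from Jucys [J] --- and your argument is the standard one for that result: establish the product identity $(x+J_1)\cdots(x+J_n)=\sum_{\sigma\in S(n)}x^{c(\sigma)}\sigma$ by induction, using the bijection $(\tau,i)\mapsto\tau(i,n)$ onto the permutations not fixing $n$ and the fact that multiplying by $(i,n)$ merges the two cycles containing $i$ and $n$, then compare coefficients of $x^{n-r}$. The bookkeeping (the $+1$ cycle from embedding $S(n-1)\hookrightarrow S(n)$ cancelling the $-1$ from the cycle merge) is handled correctly, so there is nothing to fix.
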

	\noindent
	In other words, Jucys' result states that $e_r(J_1,\dots,J_n)$ is the indicator function
	of the set of permutations with exactly $n-r$ cycles.  For example, the 
	class basis resolution of $e_2(J_1,J_2,J_2,J_4)$ is the sum of all classes
	indexed by Young diagrams
	$\mu \vdash 4$ with exactly $2$ rows:
	\begin{equation}
		e_2(J_1,J_2,J_3,J_4) = \CC_{(3,1)} + \CC_{(2,2)}.
	\end{equation}
	
	In symmetric function theory, the complete homogeneous symmetric polynomials
	are in many ways ``dual'' to the elementary symmetric polynomials.  Recall
	that $h_r \in \Sym[x_1,\dots,x_n]$ are defined by $h_0:=1$ and
	\begin{equation}
		h_r := \sum_{1 \leq i_1 \leq i_2 \leq \dots \leq i_r \leq n}x_{i_1}x_{i_2} \dots
		x_{i_r}
	\end{equation}
	for $r \geq 1.$  Clearly $e_r =0$ for $r > n,$ but this is not the case for the 
	$h_r$'s.  Indeed one has the generating functions
	\begin{equation}
		E(t) = \sum_{r \geq 0} e_r t^r = (1+x_1t)(1+x_2t) \dots (1+x_nt),
	\end{equation}
	valid as an identity in $\C[x_1,\dots,x_n,t],$ and 
	\begin{equation}
		H(t) = \sum_{r \geq 0} h_rt^r = (1-x_1t)^{-1}(1-x_2t)^{-1} \dots (1-x_nt)^{-1},
	\end{equation}
	valid as an identity in $\C[x_1,\dots,x_n][[t]].$  In particular, one has the identity
	$E(t)H(-t)=1$ in the latter ring.
	
	Another example of the duality between $e_r$ and $h_r$ can be expressed in 
	terms of antisymmetric and symmetric powers of matrices.  If $A$ is an $n \times n$
	matrix with eigenvalues $x_1,\dots,x_n,$ then
	\begin{equation}
		e_r(x_1,\dots,x_n) = \Tr \wedge^r A
	\end{equation}
	while
	\begin{equation}
		h_r(x_1,\dots,x_n)= \Tr \vee^r A,
	\end{equation}
	see e.g. \cite{B}.  
	
	Given the dual nature of the elementary and complete homogeneous symmetric polynomials,
	one would hope for an analogue of Jucys result (Theorem \ref{Jucys}) for
	$h_r(J_1,\dots,J_n).$  Below we will prove that, for any $r \geq 0,$
	\begin{equation}
		h_r(J_1,\dots,J_n) = (-1)^r \sum_{\mu \vdash n} a_r(\mu) \CC_{\mu},
	\end{equation}
	where $a_r$ is precisely the $r$th term in the asymptotic expansion of the {\it
	Weingarten function}, a special function which occurs in the theory of random unitary 
	matrices.  
	
	Using known results about the Weingarten function, we characterize
	the conjugacy classes which appear in the class basis resolution of $h_r(J_1,\dots,J_n),$
	and give exact formulas for some of the coefficients.  The coefficients which we are
	able to determine explicitly correspond to ``minimal classes,'' and
	turn out to be governed by the Moebius function of the 
	non-crossing partition lattice $NC(n).$
	
	\section{The Weingarten Function}
	
	\subsection{Weingarten function in random matrix theory}
	A random matrix may be described either ``locally,'' by giving the joint distribution of 
	the matrix entries, or ``globally''' as a probability distribution on matrix space.  For
	example, the Ginibre Unitary Ensemble is described locally as the class of random matrices
	whose entries are i.i.d. standard complex Gaussian random variables, while the 
	Circular Unitary Ensemble is described globally in terms of the Haar probability measures 
	on the unitary groups.  In the latter case, a non-trivial problem is to recover the 
	local description from the global one.  This consists of determining the joint moments
	of the matrix entries of a Haar-distributed random unitary matrix, 
	which is equivalent to evaluating all matrix integrals of the form
	\begin{equation}
		I_d(i,j,i',j')=\int_{\U_d} u_{i(1)j(1)} \dots u_{i(n)j(n)} 
		\overline{u}_{i'(1)j'(1)} \dots \overline{u}_{i'(n)j'(n)}dU,
	\end{equation}
	where 
	\begin{equation}
		\U_d=\{U \in GL_d(\C) : U^*=U^{-1}\}
	\end{equation}
	is the unitary group equipped with normalized Haar measure $dU,$ and 
	$i,j,i',j':\{1,\dots,n\} \rightarrow \{1,\dots,d\}$ are arbitrary functions.  Of particular 
	interest is the large $d$ limit of the joint moments.
		
	Considerable progress on this problem has been made by Collins \cite{C} and
	Collins-\'Sniady \cite{CS}, who make use of the classical Schur-Weyl duality
	between representations of $\U_d$ and $S(n)$ in the tensors
	\begin{equation}
		\underbrace{\C^d \otimes \dots \otimes \C^d}_n,
	\end{equation}
	to exhibit the existence of a central function $\Wg_d \in Z(n)$ with the following
	remarkable property:
		\begin{equation}
		\label{convolution}
		I_d(i,j,i',j')=\sum_{\sigma, \tau \in S(n)} 
		\prod_{k=1}^n \delta_{i(k)i'(\sigma(k))} \delta_{j(k)j'(\tau(k))} 
		\Wg_d(\tau \sigma^{-1}).
	\end{equation}
	The function $\Wg_d$ is called the {\it Weingarten function}, and 
	identity (\ref{convolution}) is the {\it Weingarten convolution formula}.  It is similar 
	in spirit to the Wick formula for Gaussian random variables.  It follows 
	immediately from (\ref{convolution}) that $\Wg_d$ has the integral 
	representation
	\begin{equation}
		\Wg_d(\sigma) = \int_{\U_d} u_{11} \dots u_{nn}
		\overline{u}_{1\sigma(1)} \dots \overline{u}_{n \sigma(n)} dU,
	\end{equation}
	valid when $d \geq n.$
	In principle,
	the Weingarten convolution formula reduces the computation of the joint moments
	of the matrix entries of a random unitary matrix to the problem of explicitly describing
	the Weingarten function.  
	
	\subsection{Weingarten function and JM elements}
	We will show that $\Wg_d$ is essentially the generating function
	for complete homogeneous symmetric polynomials in JM elements.  Our approach is based
	on the following remarkable result due to Collins \cite{C}.
	
	\begin{theorem}[\cite{C}]
		\label{inverse}
	For any $d \geq n,$ $\Wg_d$ is an invertible element of $Z(n).$  The class basis
	resolution of $\Wg_d^{-1}$ is
		\begin{equation}
		\Wg_d^{-1}=\sum_{\mu \vdash n} d^{\ell(\mu)} \CC_{\mu}.
	\end{equation}
	\end{theorem}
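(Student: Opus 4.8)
The plan is to prove the equivalent statement that $\Wg_d$ is a unit in $Z(n)$ with inverse the central element
\[
  Z_d \;:=\; \sum_{\mu \vdash n} d^{\,\ell(\mu)}\,\CC_\mu \;=\; \sum_{\sigma \in S(n)} d^{\,c(\sigma)}\,\sigma ,
\]
where $c(\sigma)$ denotes the number of cycles of $\sigma$. Since each $\CC_\mu$, hence $Z_d$, is central, $\Wg_d$ and $Z_d$ commute, so it suffices to verify the single identity $\Wg_d\,Z_d=\mathrm{id}$ in $\C[S(n)]$; invertibility of $\Wg_d$ and the formula $\Wg_d^{-1}=Z_d$ follow at once. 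Conceptually this is a shadow of Schur--Weyl duality: $\int_{\U_d}U^{\otimes n}\otimes\overline{U}^{\otimes n}\,dU$ is the orthogonal projection onto the $\U_d$-invariants of $(\C^d)^{\otimes n}\otimes(\overline{\C^d})^{\otimes n}$, those invariants are spanned by the ``permutation vectors'' $v_\sigma$, the $v_\sigma$ are linearly independent exactly when $d\ge n$, and their Gram matrix is $\bigl[d^{\,c(\sigma^{-1}\tau)}\bigr]_{\sigma,\tau}$ --- essentially the matrix of multiplication by $Z_d$ in the regular representation; comparing this projection with the Weingarten convolution formula (\ref{convolution}) then identifies the ``Weingarten matrix'' with the inverse Gram matrix. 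Rather than develop this picture, I would extract the following short self-contained verification.

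\emph{Step 1.} Record the elementary counting identity $d^{\,c(\tau)} = \sum_{b}\prod_{k=1}^{n}\delta_{b(k)\,b(\tau(k))}$, the sum over all $b:\{1,\dots,n\}\to\{1,\dots,d\}$, whose right-hand side counts the colourings of $\{1,\dots,n\}$ constant on the cycles of $\tau$. \emph{Step 2.} Using $d\ge n$, fix an injection $f:\{1,\dots,n\}\hookrightarrow\{1,\dots,d\}$ and specialize (\ref{convolution}) with $i=i'=f$; since $f$ is injective the factor $\prod_k\delta_{f(k)\,f(\sigma(k))}$ forces $\sigma=\mathrm{id}$, so the double sum collapses to the single-sum identity
\[
  \sum_{\tau\in S(n)}\Wg_d(\tau)\prod_{k=1}^{n}\delta_{a(k)\,b(\tau(k))} \;=\; \int_{\U_d}\ \prod_{k=1}^{n} u_{f(k)\,a(k)}\,\overline{u}_{f(k)\,b(k)}\,dU ,
\]
valid for all $a,b:\{1,\dots,n\}\to\{1,\dots,d\}$.

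\emph{Step 3.} By centrality $\Wg_d\,Z_d=Z_d\,\Wg_d$, and the coefficient of $\pi$ in $Z_d\,\Wg_d$ is $\sum_{\tau}d^{\,c(\pi\tau^{-1})}\Wg_d(\tau)$. Rewriting $d^{\,c(\pi\tau^{-1})}$ by Step 1 and substituting $m=\tau(k)$ in the product of Kronecker deltas puts the sum in the form of Step 2 with $a=b\circ\pi$, so
\[
  (\Wg_d\,Z_d)(\pi) \;=\; \sum_{b:\{1,\dots,n\}\to\{1,\dots,d\}}\ \int_{\U_d}\ \prod_{k=1}^{n} u_{f(k)\,b(\pi(k))}\,\overline{u}_{f(k)\,b(k)}\,dU .
\]
Interchanging the colouring sum with the integral and re-indexing the first product by $m=\pi(k)$, each colour value $b(m)$ occurs in exactly one $u$-factor and one $\overline{u}$-factor, so the sum over $b(m)$ yields $\sum_{c=1}^{d}u_{f(\pi^{-1}(m))\,c}\,\overline{u}_{f(m)\,c}=(UU^{*})_{f(\pi^{-1}(m))\,f(m)}=\delta_{f(\pi^{-1}(m))\,f(m)}$ by unitarity, and injectivity of $f$ turns this into $\delta_{\pi^{-1}(m)\,m}$. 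Hence $(\Wg_d\,Z_d)(\pi)=\int_{\U_d}\prod_{m=1}^{n}\delta_{\pi^{-1}(m)\,m}\,dU$, which equals $1$ when $\pi=\mathrm{id}$ and $0$ otherwise. Thus $\Wg_d\,Z_d=\mathrm{id}$, completing the proof.

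I expect the main obstacle to be the combinatorial bookkeeping in Steps 2--3: one must arrange the specialization of (\ref{convolution}) and the colouring re-indexings so that the double sum over $S(n)\times S(n)$ genuinely collapses and the unitarity relation $UU^{*}=I$ becomes visible after the sum--integral interchange. One must also be careful to invoke the hypothesis $d\ge n$ precisely where it is needed --- to guarantee the existence of the injection $f$ (equivalently, to stay in the range where $Z_d$ acts invertibly on every $S(n)$-irreducible and $\Wg_d$ has its integral representation) --- since for $d<n$ the element $Z_d$ is a zero-divisor and the statement is false.
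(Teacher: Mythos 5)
Your proof is correct. Note first that the paper does not actually prove Theorem \ref{inverse}: it is quoted from Collins \cite{C} and used as a black box in the proof of Theorem \ref{main}, so there is no internal argument to compare against. Collins's own route (which you sketch in your ``conceptual'' paragraph) goes through Schur--Weyl duality: the conditional expectation onto the commutant of $\U_d$ in $(\C^d)^{\otimes n}$ is computed against the Gram matrix $[d^{\,c(\sigma^{-1}\tau)}]$ of the permutation operators, and $\Wg_d$ is identified as a pseudo-inverse of $\sum_\mu d^{\ell(\mu)}\CC_\mu$, which is a genuine inverse exactly when $d\ge n$ because the eigenvalue of $\prod_i(d+J_i)$ on $\chi^\lambda$ is $\prod_{\Box\in\lambda}(d+c(\Box))\ne 0$. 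What you do instead is take the convolution formula (\ref{convolution}) as the given input and verify $\Wg_d\cdot\sum_\mu d^{\ell(\mu)}\CC_\mu=1$ by direct computation: the colouring identity $d^{\,c(\tau)}=\sum_b\prod_k\delta_{b(k)b(\tau(k))}$, the specialization $i=i'=f$ injective to collapse the $\sigma$-sum, and the unitarity relation $UU^*=I$ after interchanging the finite colouring sum with the integral. I checked the bookkeeping in Steps 2--3 (the substitutions $a=b\circ\pi$ and $m=\pi(k)$, and the collapse to $\prod_m\delta_{\pi^{-1}(m)m}$) and it is sound; the hypothesis $d\ge n$ enters exactly where you say it does, and your closing remark that the statement fails for $d<n$ is right since $\prod_{\Box\in(1^n)}(d+c(\Box))=0$ then. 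The trade-off is that your argument is elementary modulo (\ref{convolution}), but all the depth is hidden in (\ref{convolution}) itself, which is the Collins--\'Sniady theorem; Collins's Gram-matrix argument proves the existence of $\Wg_d$ and the inversion formula simultaneously, whereas yours cleanly derives the latter from the former.
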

	
	Let $H(t) \in \C[J_1,\dots,J_n][[t]]$ be the generating function for complete 
	homogeneous symmetric polynomials in JM elements:
	\begin{equation}
		H(t) = \sum_{r \geq 0} h_r(J_1,\dots,J_n)t^r
		=(1-J_1t)^{-1}(1-J_2t)^{-1} \dots (1-J_nt)^{-1}.
	\end{equation}
	This is a formal power series in $t$ with coefficients in the Gelfand-Zetlin subalgebra
	of $\C[S(n)]$ (in fact in $Z(n)$).
	
	\begin{theorem}[\cite{N2}]
	\label{main}
	For any $d \geq n,$
	\begin{equation}
		\Wg_d = \frac{1}{d^n}H(-\frac{1}{d})=(d+J_1)^{-1}(d+J_2)^{-1} \dots (d+J_n)^{-1}.
	\end{equation}
	\end{theorem}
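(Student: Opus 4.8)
The plan is to combine Jucys' classical formula for the Jucys–Murphy elements (Theorem \ref{Jucys}) with Collins' description of the inverse Weingarten function (Theorem \ref{inverse}), using the polynomial identity $E(t)H(-t) = 1$. First I would observe that since $J_1 = 0$ and the $J_i$ commute pairwise inside $GZ(n)$, the product $(d+J_1)(d+J_2)\cdots(d+J_n)$ is exactly $d^n E(1/d)$ where $E(t) = \sum_{r\geq 0} e_r(J_1,\dots,J_n) t^r = \prod_{i=1}^n (1+J_i t)$; here I am using that $e_r = 0$ for $r > n$ so the generating function is genuinely a polynomial of degree $n$ in $t$. Applying Jucys' Theorem \ref{Jucys} term by term, this polynomial evaluates to $\sum_{r=0}^n d^{n-r}\sum_{\ell(\mu)=n-r}\CC_\mu = \sum_{\mu\vdash n} d^{\ell(\mu)}\CC_\mu$, which is precisely $\Wg_d^{-1}$ by Collins' Theorem \ref{inverse}.

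Next I would justify that $(d+J_1)^{-1}\cdots(d+J_n)^{-1}$ makes sense and equals $\Wg_d$. Since $d \geq n$, each factor $d + J_i$ is invertible: the eigenvalues of $J_i$ acting in any irreducible representation of $S(n)$ are the contents of boxes, integers lying in $\{-(n-1),\dots,n-1\}$, so $d + J_i$ has only nonzero (in fact positive) eigenvalues and is therefore a unit in $\C[S(n)]$; moreover these inverses lie in the commutative algebra $GZ(n) = \C[J_1,\dots,J_n]$, so they commute and their product is well-defined and independent of order. From the computation above, $\big[(d+J_1)^{-1}\cdots(d+J_n)^{-1}\big]\cdot\Wg_d^{-1} = 1$, and since $Z(n)$ is a commutative algebra in which $\Wg_d$ is invertible (Theorem \ref{inverse}), uniqueness of inverses gives $(d+J_1)^{-1}\cdots(d+J_n)^{-1} = \Wg_d$.

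It remains to match this with the generating function $H$. By the identity $E(t)H(-t) = 1$ in $\C[J_1,\dots,J_n][[t]]$ — which holds because the $J_i$ commute and the single-variable identity $(1+xt)^{-1} = \sum_r (-1)^r x^r t^r$ multiplies out correctly — we have $H(-t) = E(t)^{-1} = \prod_{i=1}^n (1+J_i t)^{-1}$ as formal power series. Substituting $t = 1/d$ (legitimate since, by the content bound above, $\|J_i\| < d$, so the geometric series $(1+J_i/d)^{-1} = \sum_r (-1)^r J_i^r d^{-r}$ converges in the finite-dimensional algebra $\C[S(n)]$ and agrees with the algebraic inverse), we obtain $H(-1/d) = \prod_{i=1}^n(1 + J_i/d)^{-1} = d^n\prod_{i=1}^n(d+J_i)^{-1}$, hence $\frac{1}{d^n}H(-1/d) = (d+J_1)^{-1}\cdots(d+J_n)^{-1} = \Wg_d$.

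The main obstacle is the analytic subtlety of substituting a numerical value into a formal power series with operator coefficients: one must verify that the formal-series manipulations ($E(t)H(-t)=1$, expansion of each factor as a geometric series) remain valid after setting $t = 1/d$, and that the resulting convergent sum coincides with the honest algebraic inverse in $\C[S(n)]$. This is handled cleanly by the content bound $\operatorname{Spec}(J_i)\subseteq\{-(n-1),\dots,n-1\}$, which for $d\geq n$ forces $d+J_i$ to be invertible with a norm-convergent Neumann series; once this is in place, everything else is formal bookkeeping with commuting elements and an appeal to Theorems \ref{Jucys} and \ref{inverse}.
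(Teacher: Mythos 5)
Your proposal is correct and follows the same route as the paper: identify $d^nE(1/d)$ with $\Wg_d^{-1}$ via Jucys' theorem and Collins' formula, then invert using $E(t)H(-t)=1$. The extra care you take with the invertibility of each $d+J_i$ (via the content bound) and with the legitimacy of substituting $t=1/d$ into the formal series is a welcome filling-in of details the paper leaves implicit, but it is not a different argument.
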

	
	\begin{proof}
		Observe that 
		\begin{align*}
		d^n E(\frac{1}{d}) &= d^n \sum_{r=0}^n e_r(J_1,\dots,J_n) d^{-r} \\
		&= \sum_{r =0}^n d^{n-r} \sum_{\substack{\mu \vdash n\\ \ell(\mu)=n-r}} \CC_{\mu} \\
		&= \sum_{\mu \vdash n} d^{\ell(\mu)} \CC_{\mu} \\
		&= \Wg_d^{-1},
		\end{align*}
		where the second line follows by applying Jucys' theorem.  Since
		$E(t)H(-t)=1,$ the result follows.
	\end{proof}
	
	\subsection{Asymptotic expansion of $\Wg_d$ and the Moebius function}
	Consider now the resolution of $\Wg_d$ with
	respect to the class basis of $Z(n):$
	\begin{equation}
		\Wg_d = \sum_{\mu \vdash n} \Wg_d(\mu) \CC_{\mu}.
	\end{equation}
	Collins \cite{C} demonstrated the existence of a sequence of central functions
	\begin{equation}
		\label{LaurentCoefficients}
		a_0,a_1,\dots,a_r, \dots \in Z(n)
	\end{equation}
	with the property that
	\begin{equation}
		\label{asymptotic}
		\Wg_d(\mu) = \frac{a_0(\mu)}{d^n} + \frac{a_1(\mu)}{d^{n+1}}+ \dots + 
		\frac{a_r(\mu)}{d^{n+r}} + \dots
	\end{equation}
	for all $\mu \vdash n.$  The series (\ref{asymptotic}) is the {\it asymptotic expansion} of
	$\Wg_d.$
	
	Collins \cite{C} has shown that the first order asymptotics of
	$\Wg_d$ are governed by the Moebius function of the non-crossing partition
	lattice $NC(n).$  Recall that $NC(n)$ consists of the 
	set of all non-crossing partitions of $\{1,\dots,n\}$ under the reverse refinement
	partial order: $\pi \leq \pi'$ in $NC(n)$ if and only if every block of $\pi$ is contained in 
	a block of $\pi'.$  $NC(n)$ is a lattice with minimal element
	\begin{equation}
		0_n=\{1\} \sqcup \{2\} \sqcup \dots \sqcup \{n\}
	\end{equation}
	and maximal element
	\begin{equation}
		1_n=\{1,2,\dots,n\}.
	\end{equation}
	
	Non-crossing partition lattices were first studied by Kreweras \cite{K} from a 
	purely combinatorial point of view.  Later, Speicher discovered that non-crossing
	partition lattices play the same role in Voiculescu's free probability theory that
	full partition lattices play in classical probability, see \cite{NS} for further information.
	The Moebius function of $NC(n)$ is determined as follows: for 
	$\pi=V_1 \sqcup \dots \sqcup V_{\ell} \in NC(n),$
	\begin{equation}
		\Moeb([0_n,\pi])=(-1)^{n-\ell} \prod_{i=1}^{\ell} \Cat_{|V_i|-1},
	\end{equation}
	where
	\begin{equation}
		\Cat_N:=\frac{1}{N+1}{2N \choose N}
	\end{equation}
	is the Catalan number.  
	
	$NC(n)$ is canonically embedded in the symmetric
	group $S(n)$ by mapping $\pi =V_1 \sqcup \dots \sqcup V_{\ell}$ 
	onto the permutation $\perm_{\pi}$
	with cycle structure determined by $V_1,\dots,V_{\ell}$ in the natural way.
	For example, the partition 
	\begin{equation}
		\pi = \{1,4,5\} \sqcup \{2,3\} \in NC(5)
	\end{equation}
	is identified with the permutation
	\begin{equation}
		\perm_{\pi} = (1,4,5)(2,3) \in S(5).
	\end{equation}
	Under this identification, the Moebius function becomes a central function
	on $S(n)$ whose class basis resolution is
		\begin{equation}
		\Moeb = \sum_{\mu \vdash n} \Moeb(\mu) \CC_{\mu},
	\end{equation}
	where
	\begin{equation}
		\Moeb(\mu)=\Moeb(\mu_1,\dots,\mu_{\ell})=(-1)^{n-\ell}
		\prod_{i=1}^{\ell} \Cat_{\mu_i-1}
	\end{equation}
	for each $\mu \vdash n.$
		
	\begin{theorem}[\cite{C}]
		\label{Collins}
	The coefficients $a_0,a_1,\dots,a_r,\dots$ in the asymptotic expansion of 
	$\Wg_d$ have the following properties:
		\begin{itemize}
		\item
		for any $r \geq 0$ and $\mu \vdash n,$ $a_r(\mu)$ is non-zero if and only if there exists
		an integer $g \geq 0$ such that
			\begin{equation}
				\ell(\mu)=n-r+2g.
			\end{equation}		
		\item
		If $\mu$ satisfies $\ell(\mu)=n-r,$ then
			\begin{equation}
				a_{n-\ell(\mu)}(\mu)=\Moeb(\mu).
			\end{equation}
		
		\end{itemize}
	\end{theorem}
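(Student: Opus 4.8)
The plan is to reduce Theorem~\ref{Collins} to a combinatorial statement about the class-basis coefficients of $h_r(J_1,\dots,J_n)$ and then to prove it by counting \emph{monotone factorizations}. By Theorem~\ref{main},
\begin{equation*}
	\Wg_d=\prod_{i=1}^n(d+J_i)^{-1}=\frac{1}{d^n}\sum_{r\geq 0}(-1)^r\,h_r(J_1,\dots,J_n)\,d^{-r},
\end{equation*}
and since the eigenvalues of each $J_i$ (the contents) lie in $\{-(n-1),\dots,n-1\}$, this geometric expansion converges for every real $d\geq n$. A convergent Laurent series in $1/d$ of the form~\eqref{asymptotic} must be the asymptotic expansion, so comparing coefficients gives $a_r(\mu)=(-1)^r h_r(\mu)$, where $h_r(\mu):=[\CC_\mu]\,h_r(J_1,\dots,J_n)$. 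Expanding the definitions of $h_r$ and of the $J_i$, one sees that for any fixed $\pi\in S(n)$ of cycle type $\mu$,
\begin{equation*}
	h_r(\mu)=\#\bigl\{\,\bigl((a_1,b_1),\dots,(a_r,b_r)\bigr):a_k<b_k,\ b_1\leq\cdots\leq b_r,\ (a_1,b_1)\cdots(a_r,b_r)=\pi\,\bigr\},
\end{equation*}
the number of length-$r$ \emph{monotone factorizations} of $\pi$; this is well defined since $h_r\in Z(n)$. The theorem is thus a statement about counting such factorizations.

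For the support claim: a monotone factorization of length $r$ exhibits $\pi$ as a product of $r$ transpositions, forcing $r\geq n-\ell(\mu)$ (the minimal transposition length of $\pi$) and $r\equiv n-\ell(\mu)\pmod 2$, which is exactly the condition $\ell(\mu)=n-r+2g$ with $g=\tfrac12(r-n+\ell(\mu))\geq 0$. Conversely (assume $n\geq 2$; the case $n=1$ is degenerate), I would first write down an explicit monotone factorization of minimal length $n-\ell(\mu)$: take $\pi$ acting on each of its supports $\{v_1<\cdots<v_m\}$ as the cycle $(v_1,\dots,v_m)$, and list the transpositions $(v_1,v_2),\dots,(v_{m-1},v_m)$, over all cycles, in increasing order of their larger entry --- disjointness of the supports makes these larger entries distinct, so the list is monotone and its product is $\pi$. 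Appending $g$ copies of $(n-1,n)(n-1,n)$ leaves the product and the monotonicity intact, yielding length $r$. Hence $h_r(\mu)>0$, so $a_r(\mu)\neq 0$.

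For the second claim it suffices to show $h_{n-\ell(\mu)}(\mu)=\prod_{i=1}^{\ell(\mu)}\Cat_{\mu_i-1}$, since then $a_{n-\ell(\mu)}(\mu)=(-1)^{n-\ell(\mu)}\prod_i\Cat_{\mu_i-1}=\Moeb(\mu)$. \emph{Step A (multiplicativity over cycles).} In a factorization of $\pi$ of the minimal length $n-\ell(\mu)$, a count of the $\pm 1$ changes in the number of cycles shows that every factor merges two cycles, so the transpositions form a forest whose components are precisely the supports of the cycles of $\pi$. Grouping the factors according to these supports --- transpositions on disjoint supports commute --- exhibits each group as a minimal monotone factorization of the corresponding cycle of $\pi$; conversely a tuple of such per-cycle factorizations reassembles into a \emph{unique} monotone factorization of $\pi$, because distinct supports are disjoint and hence no two transpositions from different cycles share a larger entry. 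This bijection gives $h_{n-\ell(\mu)}(\mu)=\prod_i g_{\mu_i}$, where $g_k$ denotes the number of minimal monotone factorizations of a $k$-cycle. \emph{Step B (the Catalan recursion).} The product formula for $H(t)$ yields the recursion $h_r(J_1,\dots,J_n)=h_r(J_1,\dots,J_{n-1})+h_{r-1}(J_1,\dots,J_n)\,J_n$; taking the coefficient of the $k$-cycle $(1,2,\dots,k)$ --- which does not appear in $h_{k-1}(J_1,\dots,J_{k-1})$, an element of $\C[S(k-1)]$ --- and using $(1,2,\dots,k)(a,k)=(1,\dots,a)(a{+}1,\dots,k)$ together with Step A for the two-block types gives $g_k=\sum_{a=1}^{k-1}g_a g_{k-a}$, with $g_1=1$. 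This is the defining recursion of the Catalan numbers, whence $g_k=\Cat_{k-1}$.

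I expect the crux to be Step A: proving the forest/merge structure of minimal transposition factorizations and, above all, the uniqueness of the monotone reassembly, which is precisely where disjointness of the cycle supports is used. The remaining ingredients --- the reduction in the first paragraph, the padding argument, and the passage from the $h_r$-recursion to the Catalan recursion --- are routine manipulations in the centre $Z(n)$ built on Theorem~\ref{main}.
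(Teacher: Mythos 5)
Your argument is essentially correct, but there is nothing in the paper to compare it against line by line: Theorem~\ref{Collins} is imported from Collins~\cite{C} as a black box, and Collins' own proof runs through the character expansion of $\Wg_d$ (Theorem~\ref{characterExpansion}) and asymptotics of $1/s_\lambda(1^d)$, i.e.\ through representation theory. What you have done instead is to push the paper's own reduction (Theorem~\ref{main}/\ref{equivalence}, giving $a_r(\mu)=(-1)^r[\CC_\mu]h_r(J_1,\dots,J_n)$) one step further, observing that $[\CC_\mu]h_r(J_1,\dots,J_n)$ counts monotone transposition factorizations, and then proving both bullets of the theorem by elementary combinatorics: parity and minimal transposition length for the support claim, padding by $(n-1,n)^2$ for nonvanishing, the forest/merge structure of minimal factorizations for multiplicativity over cycles, and the $h$-recursion $h_r^{(n)}=h_r^{(n-1)}+h_{r-1}^{(n)}J_n$ to extract the Catalan recursion $g_k=\sum_{a}g_ag_{k-a}$. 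All of these steps check out (the uniqueness of the monotone interleaving does rest, as you say, on the distinctness of the larger entries across disjoint supports, and the table values $2\CC_{(3,1)}+\CC_{(2,2)}$ for $h_2$ in $n=4$ confirm the Catalan products). This buys a self-contained, purely combinatorial proof of a result the paper only cites, and it is in fact the germ of the later ``monotone Hurwitz number'' interpretation of the Weingarten asymptotics; the price is that it gives no information about the higher-order coefficients ($g\geq 1$), which Collins' character-theoretic route at least locates inside explicit rational functions of $d$. One small caveat worth recording: for $n=1$ the ``if'' direction of the first bullet is genuinely false as stated (all $h_r(J_1)=h_r(0)$ vanish for $r\geq 1$ even though the parity condition holds), so your exclusion of that case is not merely a convenience but a necessity.
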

	
	\subsection{The class expansion of $h_r(J_1,\dots,J_n)$}
	According to Theorem \ref{main}
	\begin{equation}
		\Wg_d = \frac{1}{d^n}H(-\frac{1}{d})=\sum_{r \geq 0}(-1)^rh_r(J_1,\dots,J_n)\frac{1}
		{d^{n+r}}.
	\end{equation}
	Since this is precisely the asymptotic expansion of $\Wg_d,$ we have the following
	result.
	
	\begin{theorem}
		\label{equivalence}
		The coefficients in the asymptotic expansion of $\Wg_d$ are the complete homogeneous
		symmetric polynomials in JM elements:
		\begin{equation}
			a_r=(-1)^rh_r(J_1,\dots,J_n).
		\end{equation}
		Equivalently the coefficients in the asymptotic expansion of $\Wg_d$ are the
		coefficients in the class basis resolution of complete homogeneous symmetric 
		polynomials in JM elements:
		\begin{equation}
			h_r(J_1,\dots,J_n)=(-1)^r \sum_{\mu \vdash n} a_r(\mu) \CC_{\mu}.
		\end{equation}
	\end{theorem}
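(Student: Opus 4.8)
The plan is to obtain the identity simply by matching two expansions of $\Wg_d$ that are already in hand. By Theorem \ref{main}, for every $d \geq n$ one has $\Wg_d = (d+J_1)^{-1}\cdots(d+J_n)^{-1}$ in $Z(n)$. I would expand each factor as $(d+J_i)^{-1} = \frac1d\sum_{k\geq 0}(-J_i/d)^k$ --- legitimate because the eigenvalues of $J_i$ are the contents of the boxes of Young diagrams of size $n$, hence bounded in modulus by $n-1 < d$ --- and collect powers of $1/d$; this reproduces exactly the generating-function identity recorded just after Theorem \ref{main}, namely
\begin{equation*}
  \Wg_d = \frac{1}{d^n}\,H\!\left(-\tfrac1d\right) = \sum_{r\geq 0}(-1)^r h_r(J_1,\dots,J_n)\,\frac{1}{d^{n+r}}.
\end{equation*}

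Next I would pass to the class basis. Writing $h_r(J_1,\dots,J_n) = \sum_{\mu\vdash n} c_r(\mu)\CC_\mu$ and evaluating the display above on a class $\mu$ gives $\Wg_d(\mu) = \sum_{r\geq 0}(-1)^r c_r(\mu)\,d^{-(n+r)}$ for all $d \geq n$. The asymptotic expansion (\ref{asymptotic}) is another expansion of the same function $d \mapsto \Wg_d(\mu)$ in descending powers of $d$, convergent in the same range. Since such an expansion determines its coefficients uniquely --- recover $a_0(\mu) = \lim_{d\to\infty} d^n\Wg_d(\mu)$, then $a_1(\mu) = \lim_{d\to\infty} d^{n+1}\bigl(\Wg_d(\mu) - a_0(\mu)d^{-n}\bigr)$, and so on by induction --- matching the two forces $a_r(\mu) = (-1)^r c_r(\mu)$ for every $r \geq 0$ and every $\mu \vdash n$. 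This is precisely the asserted identity $a_r = (-1)^r h_r(J_1,\dots,J_n)$ in $Z(n)$, together with its class-basis reformulation.

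There is essentially no hard step here: all of the substance has been pushed into Theorem \ref{main} (which in turn rests on Jucys' theorem and Collins' formula for $\Wg_d^{-1}$). The only place where a word of care is needed is the term-by-term identification of the two series, i.e.\ the uniqueness of the coefficients of the expansion of $\Wg_d$; this is immediate once one notes that Collins' expansion is genuinely convergent for $d \geq n$ rather than merely asymptotic in Poincar\'e's sense, so the inductive limit argument above applies verbatim. I would close by pointing out that the real payoff of the identification runs in the other direction: it converts the structural facts about $a_r$ collected in Theorem \ref{Collins} into information about which conjugacy classes appear in the class resolution of $h_r(J_1,\dots,J_n)$ and about the values of its minimal-height coefficients, and exploiting this is the business of the remainder of the paper.
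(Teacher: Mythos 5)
Your proposal is correct and follows essentially the same route as the paper: expand $\Wg_d = \frac{1}{d^n}H(-\frac{1}{d})$ via Theorem \ref{main} and identify the resulting series in $1/d$ with Collins' asymptotic expansion. The paper treats the term-by-term identification as immediate, whereas you spell out the convergence and uniqueness of the coefficients; this is a welcome clarification but not a different argument.
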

		
	Collins' theorem \ref{Collins} on the asymptotic expansion of $\Wg_d(\mu)$ says
	that the coefficients $a_r(\mu)$ go down in steps of two, starting at $a_{n-\ell(\mu)}(\mu)
	=\Moeb(\mu).$ 
	
	\begin{corollary}
	The class basis resolution of $h_r(J_1,\dots,J_n)$ is of the form
	\begin{equation}
		h_r(J_1,\dots,J_n) = (-1)^r \sum_{g \geq 0} \sum_{\substack{\mu \vdash n\\
		\ell(\mu)=n-r+2g}} a_r(\mu) \CC_{\mu}.
	\end{equation}
	When $n > r \geq 1,$ this is
	\begin{equation}
		h_r(J_1,\dots,J_n) = (-1)^r \sum_{\substack{\mu \vdash n\\ \ell(\mu)=n-r}}
		\Moeb(\mu) \CC_{\mu} + (-1)^r\sum_{g \geq 1} \sum_{\substack{\nu \vdash n\\
		\ell(\nu)=n-r+2g}} a_r(\nu) \CC_{\mu}.
	\end{equation}
	\end{corollary}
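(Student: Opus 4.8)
The plan is to obtain both displayed identities as direct consequences of Theorem \ref{equivalence} and Collins' Theorem \ref{Collins}; no new computation is required. Theorem \ref{equivalence} already provides the class basis expansion
\begin{equation*}
	h_r(J_1,\dots,J_n) = (-1)^r \sum_{\mu \vdash n} a_r(\mu) \CC_\mu,
\end{equation*}
so the first step is simply to prune this sum. By the first bullet of Theorem \ref{Collins}, $a_r(\mu) = 0$ unless $\ell(\mu) = n - r + 2g$ for some integer $g \geq 0$; discarding the vanishing terms and regrouping the survivors according to the value of $g$ reproduces the first identity verbatim.

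For the refined statement when $n > r \geq 1$, I would peel off the $g = 0$ summand. First one checks that the condition $\ell(\mu) = n - r$ is satisfiable under this hypothesis — for instance $\mu = (r+1, 1^{n-r-1})$ is a partition of $n$ with exactly $n - r$ parts — so the $g = 0$ block is nonempty. For any $\mu \vdash n$ with $\ell(\mu) = n - r$ one has $n - \ell(\mu) = r$, so the second bullet of Theorem \ref{Collins} applies and yields $a_r(\mu) = a_{n-\ell(\mu)}(\mu) = \Moeb(\mu)$. Substituting this into the $g = 0$ block while leaving the blocks with $g \geq 1$ unchanged gives the second identity.

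The only genuine obstacle was surmounted upstream: the existence of the asymptotic expansion, the parity constraint $\ell(\mu) \equiv n - r \pmod 2$ on its support, and the identification of the leading coefficients with $\Moeb$ are all packaged into Theorems \ref{equivalence} and \ref{Collins}. The sole subtlety at this stage is the non-vacuousness of the minimal block, which is exactly where the hypothesis $r < n$ enters: if instead $r = n$, the smallest surviving length would be $\ell(\mu) = 2$ rather than $0$, and the leading coefficients would no longer be values of the non-crossing partition Moebius function.
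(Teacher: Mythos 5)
Your proposal is correct and follows exactly the route the paper intends: the corollary is an immediate consequence of Theorem \ref{equivalence} combined with the two bullets of Theorem \ref{Collins}, and the paper itself offers no further argument beyond the remark that the coefficients ``go down in steps of two, starting at $a_{n-\ell(\mu)}(\mu)=\Moeb(\mu)$.'' Your additional checks — exhibiting $\mu=(r+1,1^{n-r-1})$ to show the minimal block is nonempty when $r<n$, and noting why the identification with $\Moeb$ fails at $r=n$ — are correct and slightly more careful than the paper's own treatment.
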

	
	As an example, consider the class basis resolution of $h_2(J_1,J_2,J_3,J_4)$ 
	(recall that we computed $e_2(J_1,J_2,J_3,J_4)$ above as an example of Jucys'
	theorem).  Since $4-2=2,$ only classes of height $2$ (corresponding to $g=0$) or
	$4$ (corresponding to $g=1$) can occur in the resolution of 
	$h_2(J_1,J_2,J_3,J_4)$.  We know that the coefficients of the ``minimal'' $g=0$ classes
	are given by the Moebius function of $NC(n).$  Thus
	\begin{equation}
		h_2(J_1,J_2,J_3,J_4) = \underbrace{2\CC_{(3,1)}+ \CC_{(2,2)}}_{g=0}+
		\underbrace{a_2((1,1,1,1))\CC_{(1,1,1,1)}}_{g=1}.
	\end{equation}
			
	It is tempting to hypothesize that the coefficients $a_r(\mu)$ have a topological 
	interpretation for arbitrary $g \geq 0,$ as in \cite{GJ} where it is found that
	the coefficients in the class basis resolution of transitive powers of JM elements
	are related to branched covers of the sphere.  This is supported by the fact that
	the $g=0$ case in our setting is determined by the Moebius function of the lattice
	of partitions whose graphical representations can be embedded on the sphere.
	
	\section{Character Expansion and Content Evaluation}
	
	\subsection{Character expansion via JM elements}
	Another basis of the center $Z(n)$ of the group algebra $\C[S(n)]$ is provided
	by the set $\{\chi^{\lambda}\}_{\lambda \vdash n}$ of irreducible characters of 
	$S(n).$  If $f \in Z(n)$ is a central function, its resolution
	\begin{equation}
		f=\sum_{\lambda \vdash n} f(\lambda)\chi^{\lambda}
	\end{equation}
	with respect to the character basis of $Z(n)$ is called the {\it character expansion}
	of $f.$  In this section we explicitly determine the character expansions of
	$\Wg_d$ and $h_r(J_1,\dots,J_n).$
	
	The character expansion of $\Wg_d$ can be deduced from Theorem
	\ref{main} and the following fundamental property of JM elements,
	which is due to Jucys.  Recall that the {\it content} $c(\Box)$ a cell $\Box$
	in a Young diagram $\lambda$ is the column index of $\Box$ subtract the
	row index of $\Box.$  
	For a Young diagram $\lambda,$ we denote by $A_{\lambda}$ the multiset 
	(or ``alphabet'') of
	its contents.  For example, if $\lambda=(4,2,1)$ then $A_{\lambda}=
	\{\{0,1,2,3,-1,0,-2\}\}.$
	For a symmetric polynomial $f$ in $n$ variables, $f(A_{\lambda})$ is the 
	{\it content evaluation} of $f$ at $\lambda.$  For our example diagram this
	would be
	\begin{equation}	
		f(0,1,2,3,-1,0,2).
	\end{equation}
	
	\begin{theorem}[\cite{J}]
		\label{content}
	For any symmetric polynomial $f$ in $n$ variables,
	\begin{equation}
		f(J_1,\dots,J_n)\chi^{\lambda}=f(A_{\lambda})\chi^{\lambda}.
	\end{equation}
	That is, $\chi^{\lambda}$ is an eigenvector for the linear operator
	``multiplication by $f(J_1,\dots,J_n)$'' on $Z(n)$ with corresponding
	eigenvalue $f(A_{\lambda}).$
	\end{theorem}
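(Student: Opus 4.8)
The plan is to first pin down how the individual JM elements act on an irreducible module, and then transfer that information to the central element $\chi^\lambda$. Let $V^\lambda$ denote the irreducible $\C[S(n)]$-module affording $\chi^\lambda$. I would prove, by induction on $n$, that $V^\lambda$ has a basis $\{v_T\}$ indexed by the standard Young tableaux $T$ of shape $\lambda$ on which every $J_i$ acts diagonally,
\[
    J_i\, v_T = c\big(T^{-1}(i)\big)\, v_T,
\]
i.e. the eigenvalue of $J_i$ on $v_T$ is the content of the cell of $T$ occupied by $i$. The inductive step uses the (multiplicity-free) branching rule $V^\lambda\big|_{S(n-1)}=\bigoplus_{\mu\nearrow\lambda}V^\mu$, so an inductively chosen basis on each $V^\mu$ extends to a basis of $V^\lambda$ on which $J_1,\dots,J_{n-1}$ are already diagonal with content eigenvalues. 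To handle $J_n$, write $J_n=X_n-X_{n-1}$ where $X_k:=J_1+\dots+J_k=\sum_{1\le i<j\le k}(i,j)$ is the sum of all transpositions in $S(k)$, hence central in $\C[S(k)]$; using the classical fact that $X_k$ acts on the irreducible module for $S(k)$ labelled by $\nu\vdash k$ by the scalar $\sum_{\Box\in\nu}c(\Box)$, any $v$ lying in the $V^\mu$-component of $V^\lambda\big|_{S(n-1)}$ satisfies $J_n v=\big(\sum_{\Box\in\lambda}c(\Box)-\sum_{\Box\in\mu}c(\Box)\big)v$, which is exactly the content of the unique cell $\lambda/\mu$, i.e. the cell that $n$ occupies in the corresponding tableau. (Commutativity of the $J_i$ is a by-product, since they are simultaneously diagonalized in each block.)

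With the spectrum of the $J_i$ in hand, the content statement at the level of operators is immediate: for a symmetric polynomial $f$ in $n$ variables,
\[
    f(J_1,\dots,J_n)\,v_T=f\big(c(T^{-1}(1)),\dots,c(T^{-1}(n))\big)\,v_T,
\]
and since $T$ numbers the cells of $\lambda$ bijectively, the tuple $\big(c(T^{-1}(1)),\dots,c(T^{-1}(n))\big)$ is a rearrangement of the multiset $A_\lambda$; because $f$ is symmetric the eigenvalue equals $f(A_\lambda)$, independently of $T$. Hence $f(J_1,\dots,J_n)$ acts on $V^\lambda$ as the scalar $f(A_\lambda)$. In particular this reproves that $f(J_1,\dots,J_n)\in Z(n)$, an element acting as a scalar on every irreducible being central.

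To finish I would transfer this to $Z(n)$. Writing $e_\lambda=\tfrac{d_\lambda}{n!}\sum_{\sigma\in S(n)}\chi^\lambda(\sigma^{-1})\sigma$ for the primitive central idempotent of the $\lambda$-block, and using that $\chi^\lambda$ is real-valued on $S(n)$, one has $\chi^\lambda=\tfrac{n!}{d_\lambda}e_\lambda$ as an element of $\C[S(n)]$. For any $z\in Z(n)$, $z\,e_\lambda=\omega_\lambda(z)\,e_\lambda$, where $\omega_\lambda(z)$ is the scalar by which $z$ acts on $V^\lambda$; applying this with $z=f(J_1,\dots,J_n)$ and $\omega_\lambda(z)=f(A_\lambda)$ from the previous step gives $f(J_1,\dots,J_n)\chi^\lambda=f(A_\lambda)\chi^\lambda$, which is the assertion. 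Equivalently, under the Wedderburn decomposition $\C[S(n)]\cong\bigoplus_\lambda\operatorname{End}(V^\lambda)$ the element $\chi^\lambda$ is supported on the $\lambda$-block, where it is a scalar, so left multiplication by the block-scalar $f(A_\lambda)$ of $f(J_1,\dots,J_n)$ has precisely the stated effect; moreover $f(A_\lambda)$ is then literally an eigenvalue of the multiplication operator on $Z(n)$.

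The main obstacle is the inductive determination of the joint spectrum of $J_1,\dots,J_n$ on $V^\lambda$: this is the technical heart (essentially the Okounkov--Vershik / Young seminormal-form analysis), and the one external input inside it is the evaluation of the central character of the sum of all transpositions as the sum of contents, which can itself be extracted from Jucys' Theorem~\ref{Jucys} for $e_1$ together with the Frobenius character formula, or proved directly by a short induction. Everything after that step is Wedderburn bookkeeping.
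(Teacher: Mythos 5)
Your argument is correct, but there is nothing in the paper to compare it against: Theorem~\ref{content} is stated as a citation to Jucys' 1974 paper and no proof is given in the text, so you have supplied a proof where the author supplies a black box. What you have written is the standard Okounkov--Vershik/Murphy derivation, and all of its steps check out: the multiplicity-free branching rule lets you build a Gelfand--Zetlin basis $\{v_T\}$ of $V^{\lambda}$ indexed by standard tableaux; writing $J_n=X_n-X_{n-1}$ with $X_k$ the (central) sum of all transpositions of $S(k)$ correctly identifies the eigenvalue of $J_n$ on the $V^{\mu}$-component as $\sum_{\Box\in\lambda}c(\Box)-\sum_{\Box\in\mu}c(\Box)$, the content of the added cell; symmetry of $f$ then makes $f(J_1,\dots,J_n)$ act on $V^{\lambda}$ by the scalar $f(A_{\lambda})$ independently of $T$ (and forces centrality); and the passage to $f(J_1,\dots,J_n)\chi^{\lambda}=f(A_{\lambda})\chi^{\lambda}$ via $\chi^{\lambda}=\frac{n!}{\dim(\lambda)}e_{\lambda}$ and $z\,e_{\lambda}=\omega_{\lambda}(z)e_{\lambda}$ is routine Wedderburn bookkeeping. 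You correctly isolate the one external input, namely that $X_k$ acts on the irreducible module labelled by $\nu\vdash k$ by $\sum_{\Box\in\nu}c(\Box)$; just make sure to establish this by a route independent of the diagonal action of the $J_i$ (the Frobenius character formula $\binom{k}{2}\chi^{\nu}(2,1^{k-2})/\dim(\nu)=\sum_{\Box\in\nu}c(\Box)$, or a direct induction on the central character), since deriving it from the JM spectrum you are in the middle of constructing would be circular. Your route in fact proves more than the theorem asks --- the full content spectrum of each individual $J_i$ --- whereas the theorem only needs the scalar by which the symmetric combination acts; this extra strength is standard and costs nothing.
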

	
	Using Theorem \ref{content} we can find the character expansion
	of the Weingarten function.  
	Given a Young diagram $\lambda \vdash n,$
	let $H_{\lambda}$ denote the product of hook-lengths of $\lambda.$  
	Let 
		\begin{equation}
			s_{\lambda}(1^d) = \frac{1}{H_{\lambda}} \prod_{\Box \in \lambda}(d+c(\Box))
		\end{equation}
	be the Schur polynomial $s_{\lambda}$ evaluated at $(\underbrace{1,1\dots,1}_d)$
	(see \cite{S}).
	
	\begin{theorem}
		\label{characterExpansion}
	The character expansion of $\Wg_d \in \C[S(n)]$ is
	\begin{equation}
		\Wg_d=\sum_{\lambda \vdash n} \frac{1}{H_{\lambda}^2s_{\lambda}(1^d)}
		\chi^{\lambda}.
	\end{equation}
	\end{theorem}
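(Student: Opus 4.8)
The plan is to combine the explicit factorization of $\Wg_d^{-1}$ from the proof of Theorem~\ref{main} with Jucys' content theorem (Theorem~\ref{content}). First I would recall that the proof of Theorem~\ref{main} exhibits
\[
\Wg_d^{-1} = d^n E(1/d) = \prod_{i=1}^n (d+J_i),
\]
so that $\Wg_d^{-1} = g(J_1,\dots,J_n)$ for the genuine symmetric polynomial $g(x_1,\dots,x_n) = \prod_{i=1}^n (d+x_i)$. Applying Theorem~\ref{content} to $g$, the irreducible character $\chi^\lambda$ is an eigenvector for multiplication by $\Wg_d^{-1}$ with eigenvalue
\[
g(A_\lambda) = \prod_{\Box \in \lambda} \bigl(d + c(\Box)\bigr).
\]
Because $d \geq n$, every content satisfies $c(\Box) \geq -(n-1) > -d$, so this eigenvalue is a nonzero scalar; this both justifies the inversion to follow and re-derives Collins' invertibility statement (Theorem~\ref{inverse}). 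Dividing, $\Wg_d\,\chi^\lambda = \bigl(\prod_{\Box\in\lambda}(d+c(\Box))\bigr)^{-1}\chi^\lambda$, and the product-over-contents formula for $s_\lambda(1^d)$ recorded above rewrites this as
\[
\Wg_d\,\chi^\lambda = \frac{1}{H_\lambda\, s_\lambda(1^d)}\,\chi^\lambda.
\]

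It then remains to convert this eigenvalue identity into a statement about the coefficient of $\chi^\lambda$ in the character expansion of $\Wg_d$. The subtlety is that the basis vectors $\chi^\lambda$ are not idempotent. I would pass to the primitive central idempotents $P_\lambda \in Z(n)$, which satisfy $P_\lambda P_\mu = \delta_{\lambda\mu}P_\lambda$ and $\sum_\lambda P_\lambda = 1$, and which are related to the character basis by $\chi^\lambda = H_\lambda P_\lambda$ --- this follows from the classical formula $P_\lambda = (f^\lambda/n!)\,\chi^\lambda$ together with the hook-length formula $f^\lambda = n!/H_\lambda$. The displayed identity becomes $\Wg_d\,P_\lambda = (H_\lambda s_\lambda(1^d))^{-1}P_\lambda$, whence
\[
\Wg_d = \Wg_d\cdot 1 = \sum_{\lambda\vdash n}\Wg_d\,P_\lambda = \sum_{\lambda\vdash n}\frac{1}{H_\lambda\, s_\lambda(1^d)}\,P_\lambda = \sum_{\lambda\vdash n}\frac{1}{H_\lambda^2\, s_\lambda(1^d)}\,\chi^\lambda,
\]
which is exactly the claimed character expansion.

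I do not expect a real obstacle here; the argument is essentially bookkeeping once Theorems~\ref{main} and~\ref{content} are in hand. The one place to be careful is the normalization in the last step: the hook-length formula must be invoked precisely once, in converting $f^\lambda$ to $1/H_\lambda$, so that the denominator emerges as $H_\lambda^2\, s_\lambda(1^d)$ and not $H_\lambda\, s_\lambda(1^d)$ or $H_\lambda^3\, s_\lambda(1^d)$. Equivalently, one could avoid idempotents altogether by using the multiplication rule $\chi^\lambda\chi^\mu = \delta_{\lambda\mu}H_\lambda\chi^\lambda$ in $Z(n)$ and reading off the coefficient of $\chi^\lambda$ directly from $\Wg_d\,\chi^\lambda = (H_\lambda s_\lambda(1^d))^{-1}\chi^\lambda$.
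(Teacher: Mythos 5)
Your argument is correct, and it reaches the theorem by a slightly different route than the paper. Both proofs hinge on the same two ingredients --- Theorem~\ref{main} and Jucys' content-evaluation theorem (Theorem~\ref{content}) --- together with the expansion $1=\sum_{\lambda}H_{\lambda}^{-1}\chi^{\lambda}$ of the identity, and your bookkeeping with the idempotents $P_{\lambda}=H_{\lambda}^{-1}\chi^{\lambda}$ is exactly the normalization the paper uses implicitly. The difference is where the content theorem is applied: the paper expands $\Wg_d=\sum_{r\ge 0}(-1)^r h_r(J_1,\dots,J_n)d^{-(n+r)}$, evaluates each $h_r$ at $A_{\lambda}$, and resums the resulting formal power series into $\prod_{\Box}(d+c(\Box))^{-1}$; you instead apply it once to the finite product $\Wg_d^{-1}=\prod_i(d+J_i)$ and then invert the scalar eigenvalue, checking that $d+c(\Box)\ge d-(n-1)>0$ for $d\ge n$. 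Your version avoids the infinite-series manipulation entirely and is arguably cleaner as a proof of this one theorem; the paper's version has the advantage of producing, as a byproduct, the character expansion of each individual coefficient $a_r=(-1)^r h_r(J_1,\dots,J_n)$, which is precisely what the next theorem in the paper records. One small caveat: your parenthetical claim to ``re-derive'' Theorem~\ref{inverse} overstates things --- your computation shows that the element $\sum_{\mu}d^{\ell(\mu)}\CC_{\mu}$ is invertible in $Z(n)$, but the identification of that element with $\Wg_d^{-1}$ is Collins' input, which you (like the paper) are taking as given via Theorems~\ref{inverse} and~\ref{main}. This does not affect the validity of your proof of the stated theorem.
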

	
	\begin{proof}
		By the second orthogonality
		relation for the irreducible characters of a finite group, the 
		character expansion of the unit $1$ of $Z(n)$ is
				\begin{equation*}
			1 = \frac{1}{n!} \sum_{\lambda \vdash n}\dim(\lambda) \chi^{\lambda},
		\end{equation*}
		where 
		\begin{equation}
			\dim(\lambda)=\frac{n!}{H_{\lambda}}
		\end{equation}
		is the number of standard Young tableaux of shape $\lambda.$
		Therefore by Corollary \ref{equivalence} we have the following:
		\begin{align*}
			\Wg_d &= \sum_{r \geq 0}\frac{a_r}{d^{n+r}} \\
			&= \sum_{r \geq 0}(-1)^rh_r(J_1,\dots,J_n)(\sum_{\lambda \vdash n} 			H_{\lambda}^{-1} \chi^{\lambda})\frac{1}{d^{n+r}} \\
			&=\sum_{\lambda \vdash n} H_{\lambda}^{-1}(\sum_{r \geq 0}(-1)^rh_r(A_{\lambda})
			\frac{1}{d^{n+r}})\chi^{\lambda}\\
			&= \sum_{\lambda \vdash n} H_{\lambda}^{-1} \prod_{\Box \in \lambda}
			(d+c(\Box))^{-1} \chi^{\lambda} \\
			&=\sum_{\lambda \vdash n} \frac{1}{H_{\lambda}^2s_{\lambda}(1^d)} 
			\chi^{\lambda}.
			\end{align*}
	\end{proof}
	
	Theorem \ref{characterExpansion} was derived in \cite{C} by a different method
	which does not involve JM elements.
	One benefit to our approach is that in the course of proving Theorem 
	\ref{characterExpansion}, we have shown that
	\begin{equation}
		a_r=(-1)^r \sum_{\lambda \vdash n} \frac{h_r(A_{\lambda})}{H_{\lambda}}
		\chi^{\lambda}.
	\end{equation}
	Thus we have the following.
	
	\begin{theorem}
	The character expansion of $h_r(J_1,\dots,J_n)$ is
	\begin{equation}
		h_r(J_1,\dots,J_n) = \sum_{\lambda \vdash n} \frac{h_r(A_{\lambda})}{H_{\lambda}}
		\chi^{\lambda}.
	\end{equation}
	\end{theorem}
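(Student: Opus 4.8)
The plan is to reduce the statement to Jucys' content theorem (Theorem~\ref{content}) together with the character expansion of the identity element of $Z(n)$, so that no new computation is really needed. First I would record, exactly as in the proof of Theorem~\ref{characterExpansion}, that the second orthogonality relation for the irreducible characters together with the hook-length formula $\dim(\lambda)=n!/H_{\lambda}$ gives the character expansion of the unit of $Z(n)$:
\begin{equation*}
	1 = \frac{1}{n!}\sum_{\lambda\vdash n}\dim(\lambda)\chi^{\lambda} = \sum_{\lambda\vdash n} H_{\lambda}^{-1}\chi^{\lambda}.
\end{equation*}

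Next I would multiply both sides by $h_r(J_1,\dots,J_n)$, which lies in $Z(n)$, using that multiplication by a central element acts linearly on $Z(n)$:
\begin{equation*}
	h_r(J_1,\dots,J_n) = h_r(J_1,\dots,J_n)\cdot 1 = \sum_{\lambda\vdash n} H_{\lambda}^{-1}\, h_r(J_1,\dots,J_n)\chi^{\lambda}.
\end{equation*}
Since $h_r$ is a symmetric polynomial in $n$ variables for every $r\ge 0$ (including $r>n$), Theorem~\ref{content} applies and yields $h_r(J_1,\dots,J_n)\chi^{\lambda} = h_r(A_{\lambda})\chi^{\lambda}$, where $A_{\lambda}$ is the content multiset of $\lambda$. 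Substituting gives
\begin{equation*}
	h_r(J_1,\dots,J_n) = \sum_{\lambda\vdash n}\frac{h_r(A_{\lambda})}{H_{\lambda}}\chi^{\lambda},
\end{equation*}
which is the desired identity.

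Alternatively, the same formula can simply be read off from the proof of Theorem~\ref{characterExpansion}: there the coefficient of $d^{-(n+r)}$ in the character expansion of $\Wg_d$ was shown to equal $(-1)^r\sum_{\lambda}H_{\lambda}^{-1} h_r(A_{\lambda})\chi^{\lambda}$, while by Theorem~\ref{equivalence} that coefficient is $a_r=(-1)^r h_r(J_1,\dots,J_n)$; cancelling the common sign $(-1)^r$ produces the statement. Either route is entirely routine, so there is no genuine obstacle; the only point meriting a moment's care is that the content evaluation $f\mapsto f(A_{\lambda})$ is well defined for a symmetric polynomial $f$ of arbitrary degree, because $A_{\lambda}$ has exactly $n$ elements, and hence $h_r(A_{\lambda})$ makes sense for all $r$.
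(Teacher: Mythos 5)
Your proposal is correct and follows essentially the paper's own route: the paper obtains this theorem by observing that the computation in the proof of Theorem~\ref{characterExpansion} (expanding the unit as $\sum_{\lambda} H_{\lambda}^{-1}\chi^{\lambda}$ and applying Jucys' content theorem) already yields $a_r=(-1)^r\sum_{\lambda}H_{\lambda}^{-1}h_r(A_{\lambda})\chi^{\lambda}$, which combined with $a_r=(-1)^r h_r(J_1,\dots,J_n)$ gives the statement. Your primary argument is just a slightly more direct packaging of the same two ingredients, and your stated alternative is literally the paper's derivation.
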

	
	An interesting Corollary of this result is the formula
	\begin{equation}
		\Moeb(\mu) = (-1)^{n-\ell(\mu)}
		\sum_{\lambda \vdash n} \frac{h_{n-\ell(\mu)}(A_{\lambda})}{H_{\lambda}}
		\chi^{\lambda}(\mu)
	\end{equation}
	for the Moebius function of $NC(n).$  This in turn gives an esthetically pleasing
	family of identities relating content evaluation, hook-products, characters, and 
	Catalan numbers:
	\begin{equation}
		\sum_{\lambda \vdash n} \frac{h_{n-\ell(\mu)}(A_{\lambda})}{H_{\lambda}}
		\chi^{\lambda}(\mu)
		=\prod_{i=1}^{\ell(\mu)} \Cat_{\mu_i-1}.
	\end{equation}
	
	\section{Tables}
	Using Theorem \ref{characterExpansion}, one can easily compute the 
	asymptotic expansion of $\Wg_d \in Z(n),$ and thus the class basis resolution
	of $h_r(J_1,\dots,J_n)$ for all $r \geq 0,$ from the character table 
	of $S(n).$  In this appendix we tabulate this data for $n=2,3,4.$
	
	\subsection{$n=2$}
		
			\begin{itemize}
			
				\item
				$\Wg_d((1,1))=\frac{1}{d^2-1}$
				$$=\frac{1}{d^2} + \frac{1}{d^4} + \frac{1}{d^6} +
				\frac{1}{d^8}+\frac{1}{d^{10}}  + \frac{1}{d^{12}} \frac{1}{d^{14}}
				+\frac{1}{d^{16}}+\dots.$$

				\item
				$\Wg_d((2))=\frac{-1}{d(d^2-1)}$
				$$=-\frac{1}{d^3}-\frac{1}{d^5}-\frac{1}{d^7}-\frac{1}{d^9}
				-\frac{1}{d^{11}}-\frac{1}{d^{13}}-\frac{1}{d^{15}}-\dots$$
				
			\end{itemize}
			
			\begin{align*}
				h_0(J_1,J_2) &= \CC_{(1,1)} \\
				h_1(J_1,J_2) &= \CC_{(2)} \\
				&\vdots \\
				h_{2r}(J_1,J_2) &= \CC_{(1,1)} \\
				h_{2r+1}(J_1,J_2) &=  \CC_{(2)} \\
				&\vdots
			\end{align*}
			
		\subsection{$n=3$}
		
			\begin{itemize}
			
			\item
			$\Wg_d((1,1,1))=\frac{d^2-2}{d(d^2-1)(d^2-4)}$
			$$=\frac{1}{d^3}+\frac{3}{d^5}+\frac{11}{d^7}+\frac{43}{d^9}
			+\frac{171}{d^{11}}+\frac{683}{d^{13}}+\frac{2731}{d^{15}}+\dots$$
			
			\item
			$\Wg_d((2,1))=\frac{-1}{(d^2-1)(d^2-4)}$
			$$=-\frac{1}{d^4}-\frac{5}{d^6}-\frac{21}{d^8}-\frac{85}{d^{10}}
			-\frac{341}{d^{12}}-\frac{1365}{d^{14}}-\frac{5461}{d^{16}}-\dots$$
			
			\item
			$\Wg_d((3))=\frac{2}{d(d^2-1)(d^2-4)}$
			$$=\frac{2}{d^5}+\frac{10}{d^7}+\frac{42}{d^9}+\frac{170}{d^{11}}
			+\frac{682}{d^{13}}+\frac{2730}{d^{15}}+\dots$$
			
			\end{itemize}
			
			\begin{align*}
			h_0(J_1,J_2,J_3) &= \CC_{(1,1,1)} \\
			h_1(J_1,J_2,J_3) &= \CC_{(2,1)} \\
			h_2(J_1,J_2,J_3) &= 3 \CC_{(1,1,1)}+2\CC_{(3)} \\
			h_3(J_1,J_2,J_3) &= 5\CC_{(2,1)} \\
			h_4(J_1,J_2,J_3) &= 11\CC_{(1,1,1)}+10\CC_{(3)} \\
			h_5(J_1,J_2,J_3) &= 21\CC_{(2,1)} \\
			h_6(J_1,J_2,J_3) &= 43\CC_{(1,1,1)}+42\CC_{(3)} \\
			h_7(J_1,J_2,J_3) &= 85\CC_{(2,1)} \\
			h_8(J_1,J_2,J_3) &= 171\CC_{(1,1,1)} + 170\CC_{(3)} \\
			h_9(J_1,J_2,J_3) &= 341\CC_{(2,1)} \\
			h_{10}(J_1,J_2,J_3) &= 683\CC_{(1,1,1)}+682\CC_{(3)} \\
			h_{11}(J_1,J_2,J_3) &= 1365\CC_{(2,1)} \\
			&\vdots
			\end{align*}
			
		\subsection{$n=4$}
		
			\begin{itemize}
				
				\item
				$\Wg_d((1,1,1,1))=\frac{6-8d^2+d^4}{d^2
				(-36+49d^2-14d^4+d^6)}$
				$$=\frac{1}{d^4} +\frac{6}{d^6}+\frac{41}{d^8}+\frac{316}{d^{10}}
				+\frac{2631}{d^{12}}+\frac{22826}{d^{14}}+\frac{202021}{d^{16}}+\dots$$
				
				\item
				$\Wg_d((2,1,1))=\frac{1}{9d-10d^3+d^5}$
				$$=\frac{1}{d^5}+\frac{10}{d^7}+\frac{91}{d^9}+\frac{820}{d^{11}}
				+\frac{7381}{d^{13}}+\frac{66430}{d^{15}}+\frac{597871}{d^{17}}+\dots$$
				
				\item
				$\Wg_d((2,2))=\frac{6+d^2}{d^2(-36+49d^2-14d^4+d^6)}$
				$$=\frac{1}{d^6}+\frac{20}{d^8}+\frac{231}{d^{10}}+\frac{2290}{d^{12}}
				+\frac{21461}{d^{14}}+\frac{196560}{d^{16}}+\frac{1782691}{d^{18}}
				+\dots$$
				
				\item
				$\Wg_d((3,1))=\frac{-3+2d^2}{d^2(-36+49d^2-14d^4+d^6)}$
				$$=\frac{2}{d^6}+\frac{25}{d^8}+\frac{252}{d^{10}}+\frac{2375}{d^{12}}
				+\frac{21802}{d^{14}}+\frac{197925}{d^{16}}
				+\frac{1788152}{d^{18}}+\dots$$
				
				\item
				$\Wg_d((4))=\frac{5}{d(-36+49d^2-14d^4+d^6)}$
				$$=\frac{5}{d^7}+\frac{70}{d^9}+\frac{735}{d^{11}}
				+\frac{7040}{d^{13}}+\frac{65065}{d^{15}}+\frac{592410}{d^{17}}
				+\frac{5358995}{d^{19}}+\dots$$
				
			\end{itemize}
			
			\begin{align*}
				h_0(J_1,J_2,J_3,J_4)&=\CC_{(1,1,1,1)} \\
				h_1(J_1,J_2,J_3,J_4)&=\CC_{(2,1,1)} \\
				h_2(J_1,J_2,J_3,J_4)&=6\CC_{(1,1,1,1)}+
					\CC_{(2,2)}+2\CC_{(3,1)}  \\
				h_3(J_1,J_2,J_3,J_4)&=10\CC_{(2,1,1)}+5\CC_{(4)} \\
				h_4(J_1,J_2,J_3,J_4)&=41\CC_{(1,1,1,1)}
					+20\CC_{(2,2)} + 25\CC_{(3,1)} \\
				h_5(J_1,J_2,J_3,J_4)&=91\CC_{(2,1,1)}
					+70\CC_{(4)} \\
				h_6(J_1,J_2,J_3,J_4)&=316\CC_{(1,1,1,1)}
					+231\CC_{(2,2)}+252\CC_{(3,1)} \\
				h_7(J_1,J_2,J_3,J_4)&=820\CC_{(2,1,1)}+
				735\CC_{(4)} \\
				h_8(J_1,J_2,J_3,J_4)&=2631\CC_{(1,1,1,1)}+
					2290\CC_{(2,2)}+
					2375\CC_{(3,1)} \\
				h_9(J_1,J_2,J_3,J_4)&=7381\CC_{(2,1,1)}+
					7040\CC_{(4)} \\
				h_{10}(J_1,J_2,J_3,J_4)&=22826\CC_{(1,1,1,1)}+
					21461\CC_{(2,2)}+
					21802\CC_{(3,1)} \\
				h_{11}(J_1,J_2,J_3,J_4)&=66430\CC_{(2,1,1)}+
					65065\CC_{(4)} \\
				h_{12}(J_1,J_2,J_3,J_4)&=202021\CC_{(1,1,1,1)}+
					196560\CC_{(2,2)}+
					197925\CC_{(3,1)}.
			\end{align*}

 \end{document}